 \newtheorem{thm}{Theorem}[section]
 \newtheorem{cor}[thm]{Corollary}
 \newtheorem{prop}[thm]{Proposition}
 \theoremstyle{definition}
 \newtheorem{defn}[thm]{Definition}
 \theoremstyle{remark}
 \newtheorem{rem}[thm]{Remark}
 \numberwithin{equation}{section}
 \def\bs{\begin{statement}}
\def\es{\end{statement}}
  \newtheorem{statement}[thm]{}
  \newcounter{zlist} 
  \newenvironment{zlist}{\begin{list}{(\arabic{zlist})}{ 
  \usecounter{zlist}\leftmargin2.5em\labelwidth2em\labelsep0.5em 
  \topsep0.6ex
  \parsep0.3ex plus0.2ex minus0.1ex}}{\end{list}}
  \newcounter{blist} 
  \newenvironment{blist}{\begin{list}{(\alph{blist})}{ 
  \usecounter{blist}\leftmargin2.5em\labelwidth2em\labelsep0.5em 
  \topsep0.6ex 
  \parsep0.3ex plus0.2ex minus0.1ex}}{\end{list}} 
  \newcounter{rlist}
\newcommand{\Cc}{\mathcal{C}}
\def\*C{{}^*\hspace*{-1pt}{\Cc}}
\def\text#1{{\rm {\rm #1}}}
 \def\1{\mathbf{1}}
  \def\la{\triangleright}
\def\id{\mathrm{id}}
\def\pi {\mathrm{pi}}
\def\di{\diamond}
\begin{document}

\title{Towards semi-trusses}

\author{Tomasz Brzezi\'nski}

\address{
Department of Mathematics, Swansea University, 
  Swansea SA2 8PP, U.K.\ \newline \indent
Department of Mathematics, University of Bia{\l}ystok, K.\ Cio{\l}kowskiego  1M,
15-245 Bia\-{\l}ys\-tok, Poland}

\email{T.Brzezinski@swansea.ac.uk}

\subjclass[2010]{16Y99, 16T05, 20M18}

\keywords{Truss; semi-truss; semi-brace; distributive law; cancellative semigroup; inverse semigroup}

\date{November 2017}

\begin{abstract}
A general or {\em truss} distributive laws between two associative operations on the same set are studied for cancellative and inverse semigroups.
\end{abstract}

\maketitle

\section{Introduction}
The appearance of {\em braces} in \cite{Rum:bra}, \cite{CedJes:bra} and later of {\em skew braces} in \cite{GuaVen:ske} was motivated by a search for bijective solutions to the set-theoretic Yang-Baxter equation \cite{Dri:uns}.  A  {\em (skew left) brace} is a set $A$ together with two group operations $\diamond$ and $\circ$ related by the {\em brace distributive law}, for all $a,b,c\in A$,
\begin{equation}\label{brace.law}
a\circ (b\diamond c) = (a\circ b) \diamond a^\diamond\diamond(a\circ c),
\end{equation}
where $a^\diamond$ is the inverse of $a$ with respect to $\diamond$. Soon after their introduction a deep connection of braces with group theory has been observed, thus resulting in a rapidly growing literature devoted to braces, their applications and generalisations; see e.g.\ \cite{Rum:mod},  \cite{Bac:cla}, \cite{Bac:cou}, \cite{SmoVen:ske}, \cite{LebVen:coh} for the analysis of the structure of braces,  \cite{CedGat:Yan}, \cite{BacCed:sol}, \cite{BacCed:asy}, \cite{Bac:sol}, \cite{Gat:set}, \cite{GucGug:Yan}, \cite{BacCed:char}  for the application of braces to the Yang-Baxter equation, 
\cite{Rum:dyn}, \cite{CedJes:nil}, \cite{CedGat:bra}, \cite{CatCol:reg}, \cite{Smo:Eng} for the connection between braces and group theory, and  \cite{AngGal:Hop}, \cite{Ago:con} for extension to Hopf algebras. 
 
In an attempt  at understanding the nature of the brace dristributive law, in particular in relation to the ring distributive law, 
the notion of a {\em (skew left) truss}  has been introduced in \cite{Brz:tru}. A truss is a set $A$ together with a group operation $\diamond$ and a semigroup operation $\circ$ which distributes over the {\em heap ternary operation}
\begin{equation}\label{ternary}
[-,-,-]: A\times A\times A \to A, \qquad [a,b,c] := a\diamond b^\diamond\diamond c,
\end{equation}
i.e.\
\begin{equation}\label{dist.heap}
a\circ \left (b\diamond c^\diamond\diamond d\right) = (a\circ b)\diamond (a\circ c)^\diamond\diamond(a\circ d), \qquad \mbox{for all $a,b,c,d\in A$}.
\end{equation}
The conceptually cleanest formulation of truss distributive law \eqref{dist.heap} has several  equivalent formulations in terms of auxiliary functions $\sigma: A\to A$ or $\lambda, \mu, \kappa, \hat\kappa: A\times A \to A$, namely, for all $a,b,c\in A$,
\begin{subequations}\label{truss.law}
\begin{equation}\label{sigma.law}
a\circ (b\diamond c) = (a\circ b) \diamond \sigma(a)^\diamond\diamond(a\circ c),
\end{equation}
\begin{equation}\label{lambda.law}
a\circ(b\diamond c) = (a\circ b)\diamond \lambda (a, c),
\end{equation}
\begin{equation}\label{mu.law}
a\circ(b\diamond c) = \mu (a,b)\diamond (a\circ c) , 
\end{equation}
\begin{equation}\label{kappa.law}
a\circ (b\diamond c) = \kappa(a,b)\diamond \hat\kappa(a,c).
\end{equation}
\end{subequations}
The formulation \eqref{sigma.law} makes it clear that the truss distributive law interpolates between ring and brace distributive laws. The key observations made in \cite{Brz:tru} include not only the equivalence of conditions \eqref{dist.heap} with any one of \eqref{truss.law}, but also that the maps $\lambda$ and $\mu$ encode left actions of $(A,\circ)$ on $(A,\diamond)$ by group endomorphisms which are important ingredients of the brace structures, that $\sigma$ displays the cocycle-type behaviour and that if $\sigma$ is bijective a new operation $\bullet$ replacing $\circ$ can be introduced, which distributes over $\diamond$ according to the brace law \eqref{brace.law}. In particular, if $(A,\circ)$ is a group, then $(A,\diamond,\bullet)$ is a brace.

This paper is motivated by an extension of the notion of a brace to that of a semi-brace in \cite{CatCol:sem}. A set $A$ together with binary operations $\diamond$ and $\circ$ such that $(A,\diamond)$ is a left-cancellative semigroup and $(A,\circ)$ is a group is called a {\em semi-brace}, provided, for all $a,b,c\in A$,
\begin{equation}\label{semi-brace}
a\circ (b\diamond c) = (a\circ b)\diamond \left(a\circ\left(a^\circ\diamond c\right)\right),
\end{equation}
where $a^\circ$ denotes the inverse of $a$ in $(A,\circ)$. Clearly, \eqref{semi-brace} is a special case of \eqref{lambda.law}, which leads to the definition of the main object studied in the present paper:

\begin{defn}\label{def.semi-truss}
A set $A$ together with two associative binary operations $\diamond$, $\circ$ is called a {\em left semi-truss} if there exists function $\lambda: A\times A\to A$ such that, for all $a,b,c\in A$,
\begin{equation}\label{semi-truss} 
a\circ(b\diamond c) = (a\circ b)\diamond \lambda (a, c).
\end{equation}
A left semi-truss is denoted by $(A,\diamond,\circ,\lambda)$.
\end{defn}
As a matter of course one cannot expect that the distributive law \eqref{semi-truss} has equivalent formulations such as in Eqs.\  \eqref{truss.law} or \eqref{dist.heap}, or that $\lambda$ has additional properties (such as being an action) if $\di$ is simply an associative operation; more restrictive properties must be requested of $\di$. We look at two such restrictions. First we assume that $(A,\di)$ is a left cancellative semigroup. In this case $\lambda$ is an action: just as was the case for the trusses, $(A,\circ)$ acts on $(A,\di)$ by endomorphisms through $\lambda$; see Proposition~\ref{prop.act}. Although there are no inverses in left cancellative semigroups, for elements related by a natural pre-order $a\leq b$ there exists a unique element resembling $a^\di\di b$. Using this element we can at least write a distributive law of the type \eqref{dist.heap}; we show that semi-truss with a left cancellative $(A,\di)$ satisfies this law.  If, furthermore, $(A,\di)$ has an idempotent, then this is an equivalent formulation of \eqref{dist.heap}, and there is a third equivalent formulation through a function $\sigma:A\to A$ similar to that in \eqref{sigma.law}; see Proposition~\ref{prop.equiv}. Also in this case, the function $\sigma$ satisfies the cocycle-type property, and it is bijective whenever $(A,\circ)$ is a group. If this happens then the semi-truss can be converted into a semi-brace; see Proposition~\ref{prop.sigma} and Proposition~\ref{prop.semi-brace}.

The second situation we study is when $(A,\di)$ is an inverse semigroup. Since every element $a$ of $(A,\di)$  has a unique inverse $a^\di$, one can genuinely consider all variety of formulations of the truss distributive law. We show that the ternary formulation \eqref{dist.heap} implies laws similar to \eqref{sigma.law} (albeit $\sigma$ is now a function $A\times A\to A$), which in turn imply \eqref{semi-truss} (and, through a separate route \eqref{mu.law}); see Proposition~\ref{prop.inverse}. In the opposite direction rather than obtaining equalities between two sides of a distributive law, one obtains  an inequality (through the natural partial order defined on an inverse semigroup). In a more restrictive situation (such as the one coming from the law \eqref{dist.heap}) we show that, for all $a\in A$, $\lambda (a,-):A\to A$ commutes with the inverse function on $(A,\di)$ and it is an endomorphism of the semi-lattice  $E(A,\di)$ of idempotents of $(A,\di)$.

\section{Semi-trusses for cancellative semigroups}\label{sec.cancel}
Recall that a semigroup $(A,\diamond)$ is said to be {\em left cancellative} if it has the left cancellation property, i.e.,  for all $a,b,c\in A$, $a\diamond b = a\diamond c$ implies that $b=c$. We introduce a pre-order $\leq$  on $(A,\diamond)$ by setting
\begin{equation}\label{leq.cancel}
a\leq b \quad \mbox{if and only if there exists $c\in A$ such that} \quad a\diamond c = b.
\end{equation}
By the left cancellation property if $a\leq b$ then the element $c$, $a \diamond c = b$ is unique and is denoted by $\underline{a^\diamond \diamond b}$. This notation is intended to capture the property
\begin{equation}\label{cancel.inverse}
a\diamond \underline{a^\diamond \diamond b} =b,
\end{equation}
that fully characterises $\underline{a^\diamond \diamond b}$, while the underlining should help to remember that we are dealing with a single element relating two elements rather than with the product of two separate elements. 

Observe that in a left cancellative semigroup every idempotent is a left identity.

\begin{prop}\label{prop.act}
Let $(A,\diamond,\circ,\lambda)$ be a left semi-truss. If $(A,\diamond)$ is a left cancellative semigroup, then, for all $a,b,c\in A$,
\begin{subequations}\label{act}
\begin{equation}\label{act1}
\lambda (a\circ b, c) = \lambda(a, \lambda(b,c)),
\end{equation}
\begin{equation}\label{act2}
\lambda (a, b\diamond c) = \lambda(a,b)\diamond \lambda (a,c).
\end{equation}
\end{subequations}
Furthermore, if $(A,\circ)$ has a left identity $n$, then, for all $a\in A$, 
\begin{equation}\label{act3}
\lambda(n,a) = a.
\end{equation}
\end{prop}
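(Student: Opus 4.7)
The proof is a standard left-cancellation argument: I will expand a suitably chosen compound expression in two different ways using the semi-truss law \eqref{semi-truss}, then strip off a common left $\diamond$-factor. Throughout I may assume $A \neq \emptyset$, so that a ``test'' element is available when needed (otherwise all three claims are vacuous).

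For \eqref{act1} the plan is to evaluate $(a\circ b)\circ(x\diamond c)$ in two ways, for an arbitrary $x\in A$. Treating $a\circ b$ as a single element and applying \eqref{semi-truss} yields $((a\circ b)\circ x)\diamond \lambda(a\circ b,c)$. On the other hand, rewriting the same expression via associativity of $\circ$ as $a\circ(b\circ(x\diamond c))$ and iterating \eqref{semi-truss} twice (inner, then outer) produces $((a\circ b)\circ x)\diamond \lambda(a,\lambda(b,c))$. Equating the two and cancelling $(a\circ b)\circ x$ on the left in $(A,\diamond)$ delivers \eqref{act1}.

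For \eqref{act2} I would apply the analogous trick, but now using $\diamond$-associativity: pick $x\in A$ and expand $a\circ(x\diamond b\diamond c)$ under the two bracketings $(x\diamond b)\diamond c$ and $x\diamond(b\diamond c)$. Two applications of \eqref{semi-truss} to the first grouping give $(a\circ x)\diamond \lambda(a,b)\diamond \lambda(a,c)$, while a single application to the second gives $(a\circ x)\diamond \lambda(a,b\diamond c)$. Comparing and cancelling $a\circ x$ on the left yields the claim. For \eqref{act3} I would pick any $b\in A$ and compute $n\circ(b\diamond a)$ both directly, using that $n$ is a left identity for $\circ$, and via \eqref{semi-truss}, obtaining $b\diamond a = b\diamond \lambda(n,a)$; cancelling $b$ on the left gives $\lambda(n,a)=a$.

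I do not anticipate any genuine obstacle here; the argument is entirely mechanical. The only points requiring care are that cancellation must be applied on the correct (left) side in each identity, and that $A$ is non-empty so that the witness elements $x$ (for \eqref{act1}, \eqref{act2}) and $b$ (for \eqref{act3}) are available to be stripped off after the two-way expansion.
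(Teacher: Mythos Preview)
Your proof is correct and follows essentially the same approach as the paper's: expand a compound expression in two ways using \eqref{semi-truss} and the associativity of $\circ$ or $\diamond$, then apply left cancellation. The only cosmetic difference is your choice of auxiliary element---you introduce a fresh $x$ (or $b$) to cancel, whereas the paper reuses elements already at hand (in particular, for \eqref{act3} it takes $b=n$ and computes $n\circ(n\diamond a)$).
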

\begin{proof}
All these statements are consequences of the semi-truss law \eqref{semi-truss} and the left cancellation law for $\diamond$. Specifically, to prove \eqref{act1}, let us take any $a,b,c,d\in A$. Then, on one hand
$$
(a\circ b)\circ  (c\diamond d) = (a\circ b\circ c)\diamond \lambda(a\circ b,d),
$$
while on the other
\begin{align*}
(a\circ b)\circ (c\diamond d) = (a\circ (b\circ (c\diamond d)))  &= a\circ \left( (b\circ c)\diamond \lambda (b,d)\right) \\
&= (a\circ b\circ c)\diamond \lambda\left(a,\lambda(b,d)\right).
\end{align*}
Hence property \eqref{act1} follows by the cancellation law. Similarly, on one hand
$$
a\circ (b\diamond c\diamond d) = (a\circ b)\diamond \lambda(a,c\diamond d),
$$
while on the other
\begin{align*}
a\circ (b\diamond c\diamond d) &= (a\circ (b\diamond c))\diamond \lambda (a,d)\\
&= (a\circ b)\diamond \lambda(a,c)\diamond \lambda(a,d),
\end{align*}
which yields \eqref{act2} by cancellation laws.

Finally, for all $a\in A$,
$$
n\diamond a = n\circ (n\diamond a) = (n\circ n) \diamond \lambda(n,a) = n\diamond \lambda(n,a),
$$
and \eqref{act3} follows by the left cancellation law.
\end{proof}

Proposition~\ref{prop.act} thus states that if $(A,\diamond)$ is a left cancellative semigroup, then  $(A,\circ)$ acts on $(A,\diamond)$  by endomorphisms. This is in perfect accord with the similar property of skew trusses \cite[Theorem~2.9]{Brz:tru}, and therefore also skew braces. We denote this action by
\begin{equation}\label{notation.act}
a\la b := \lambda(a,b).
\end{equation}

\begin{prop}\label{prop.equiv}
Let $\diamond$ and $\circ$ be associative binary operations on a set $A$ such that $(A,\diamond)$ is a left cancellative semigroup. Consider the following statements:
\begin{zlist}
\item There exists  function $\sigma:A\to A$ such that, for all $a,b,c\in A$, $\sigma(a)\leq a\circ c$ and
\begin{equation}\label{semi.brace}
a\circ (b\diamond c) = (a\circ b) \diamond \underline{\sigma(a)^\diamond\diamond(a\circ c)}.
\end{equation}
\item There exists function $\lambda: A\times A \to A$ so that $(A,\diamond,\circ,\lambda)$ is a left semi-truss.
\item For all $a,b,c,d\in A$, $c\leq d$,
\begin{equation}\label{cancel.heap}
a\circ \left (b\diamond \underline{c^\diamond\diamond d}\right) = (a\circ b)\diamond \underline{(a\circ c)^\diamond\diamond (a\circ d)}.
\end{equation}
\end{zlist}
Then statement (1) implies (2), and (2) implies (3). If $(A,\diamond)$ has an idempotent, then all above statements are equivalent.
\end{prop}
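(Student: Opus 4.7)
My plan is to prove the three implications (1)$\Rightarrow$(2)$\Rightarrow$(3) in general, and then (3)$\Rightarrow$(1) under the idempotent hypothesis.

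For (1)$\Rightarrow$(2), I would simply set $\lambda(a,c) := \underline{\sigma(a)^\diamond \diamond (a\circ c)}$, which is well-defined by the hypothesis $\sigma(a)\leq a\circ c$, and then \eqref{semi-truss} is a verbatim rewriting of \eqref{semi.brace}. For (2)$\Rightarrow$(3), assume $c\leq d$ and decompose $d = c\diamond \underline{c^\diamond\diamond d}$. One application of the semi-truss law gives
$$a\circ d = a\circ\bigl(c\diamond \underline{c^\diamond\diamond d}\bigr) = (a\circ c)\diamond \lambda(a,\underline{c^\diamond\diamond d}),$$
so by left cancellation $(a\circ c)\leq(a\circ d)$ with $\underline{(a\circ c)^\diamond\diamond(a\circ d)} = \lambda(a,\underline{c^\diamond\diamond d})$. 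A second application of the semi-truss law to $a\circ(b\diamond \underline{c^\diamond\diamond d})$, followed by this substitution, produces \eqref{cancel.heap}.

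The main work lies in (3)$\Rightarrow$(1) under the idempotent hypothesis. Let $n$ be an idempotent of $(A,\diamond)$; as recorded just before the proposition, $n$ is then a left identity, so $n\leq a$ and $\underline{n^\diamond \diamond a}=a$ for every $a\in A$. I would define $\sigma(a) := a\circ n$ and substitute $c\mapsto n$, $d\mapsto c$ into \eqref{cancel.heap} (with $b$ arbitrary), which is legitimate because $n\leq c$. The left-hand side collapses to $a\circ(b\diamond c)$ and the right-hand side becomes $(a\circ b)\diamond\underline{\sigma(a)^\diamond\diamond(a\circ c)}$, giving \eqref{semi.brace}. The ancillary inequality $\sigma(a)\leq a\circ c$ required by statement (1) is automatic, since well-definedness of the right-hand side of \eqref{cancel.heap} forces $(a\circ n)\leq(a\circ c)$; alternatively, specialising the same substitution to $b=n$ collapses the left-hand side to $a\circ c$ and exhibits it as $(a\circ n)\diamond(\cdot)$.

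The principal obstacle is conceptual rather than computational: recognising that the only way to extract a unary $\sigma$ from the ternary law \eqref{cancel.heap} is to evaluate one of its entries at a canonical element, and that a left identity, which is exactly what an idempotent provides in a left cancellative semigroup, furnishes such an element. This also clarifies why some hypothesis of this kind is needed; with no distinguished element available in $(A,\diamond)$, there is no evident recipe for producing $\sigma$ from the semi-truss data alone.
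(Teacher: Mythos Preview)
Your proposal is correct and matches the paper's proof essentially step for step: the same definition $\lambda(a,c)=\underline{\sigma(a)^\diamond\diamond(a\circ c)}$ for (1)$\Rightarrow$(2), the same two applications of the semi-truss law for (2)$\Rightarrow$(3), and the same specialisation $\sigma(a)=a\circ e$ with $e$ the idempotent (your $n$) together with $\underline{e^\diamond\diamond c}=c$ for (3)$\Rightarrow$(1). Your explicit remark on the ancillary inequality $\sigma(a)\leq a\circ c$ is a small addition the paper leaves implicit in the well-definedness of the right-hand side of \eqref{cancel.heap}.
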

\begin{proof} 
The implication (1) $\implies$ (2) is obvious (set $\lambda(a,c) = \underline{\sigma(a)^\diamond\diamond(a\circ c)}$). Assume that (2) holds and take any $c\leq d$. Then
$$
a\circ d = a\circ (c\diamond \underline{c^\diamond \diamond d}) = (a\circ c)\diamond \lambda (a, \underline{c^\diamond\diamond d}).
$$
Therefore, 
$$
\lambda (a, \underline{c^\diamond\diamond d}) = \underline{(a\circ c)^\diamond\diamond (a\circ d)}, 
$$
and hence
$$
a\circ \left (b\diamond \underline{c^\diamond\diamond d}\right) = (a\circ b)\diamond  \lambda (a, \underline{c^\diamond\diamond d}) = (a\circ b)\diamond \underline{(a\circ c)^\diamond\diamond (a\circ d)},
$$
as stated in (3).

Assume now that $e$ is an idempotent in $(A,\diamond)$ and define 
\begin{equation}\label{sigma}
\sigma: A\to A, \qquad a \mapsto  a\circ e.
\end{equation}
 Since, for all $a\in A$, $e\diamond a = a$, $e\leq a$. Furthermore,
$$
a = e\diamond \underline{e^\diamond\diamond a} = \underline{e^\diamond \diamond a},
$$
since $e$ is a left identity in $(A,\diamond)$. Assuming that (3) holds we compute, for all $a,b,c\in A$,
$$
a\circ (b\diamond c) = a\circ (b \diamond \underline{e^\diamond\diamond c}) = (a\circ b) \diamond \underline{(a\circ e)^\diamond\di (a\circ c)} = (a\circ b) \diamond \underline{\sigma(a)^\diamond\diamond(a\circ c)}.
$$
Hence statement (3) implies (1), and the equivalence of all three statements is established.
\end{proof}

Proposition~\ref{prop.equiv} is a cancellative semi-truss version of equivalent descriptions of the truss distributive law described in \cite[Theorem~2.5]{Brz:tru} and recalled in Introduction. One of the main differences is, however, that while in the case of a truss the action $\lambda$ determines $\sigma$ uniquely, in the case of a semi-truss, for a fixed $\lambda$ one can define a suitable $\sigma$ using any idempotent of $(A,\di)$. Furthermore and quite understandably, there is no equivalent description of the semi-truss distributive law through $\mu$ or $\kappa$ as in equations \eqref{mu.law} and \eqref{kappa.law} (except for the obvious conclusion that \eqref{lambda.law} implies \eqref{kappa.law}); Proposition~\ref{prop.equiv} leans too heavily on the one-sided cancellation law for that.

\begin{rem}\label{rem.identity}
We observe that in the setup of statement (1) of Proposition~\ref{prop.equiv}, if $(A,\circ)$ has a left identity, say $n$, then $\sigma(n)$ is an idempotent (hence also left identity) in $(A,\diamond)$, since
\begin{align*}
\sigma(n)\diamond\sigma(n) &= n\circ \left(\sigma(n)\diamond\sigma(n)\right)\\
& = (n\circ \sigma(n))\diamond \underline{\sigma(n)^\diamond\diamond (n\circ \sigma(n))}
= \sigma(n)\diamond \underline{\sigma(n)^\diamond\diamond \sigma(n)} = \sigma(n).
\end{align*}
\end{rem}

\begin{prop}\label{prop.sigma}
Let $(A,\diamond,\circ,\lambda)$ be a left semi-truss, let $e$ be an idempotent in the left cancellative semigroup $(A,\diamond)$ and let $\sigma$ be given by \eqref{sigma}. Then:
\begin{zlist}
\item For all $a,b\in A$,
\begin{equation}\label{equivariant}
\sigma(a\circ b) = a\circ \sigma(b).
\end{equation}
\item For all $a,b\in A$,
\begin{equation}\label{sig.cocycle}
\sigma(a\circ b) = \sigma(a)\diamond (a\la \sigma(b)), 
\end{equation}
where $\la$ is given by \eqref{notation.act}.
\item The map $\sigma$ is bijective if and only if $(A,\circ)$ has a right identity with respect to which $e$ is invertible.
\end{zlist}
\end{prop}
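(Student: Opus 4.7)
The plan is to handle the three parts in order: (1) is immediate from associativity of $\circ$, (2) follows from \eqref{semi-truss} after rewriting $\sigma(b)$ as $e\diamond\sigma(b)$, and (3) is proved by constructing the right identity as $\sigma^{-1}(e)$. For (1), simply observe $\sigma(a\circ b) = (a\circ b)\circ e = a\circ(b\circ e) = a\circ\sigma(b)$. For (2), recall from the text preceding Proposition~\ref{prop.act} that the idempotent $e$ is a left identity for $(A,\diamond)$, so $\sigma(b) = e\diamond(b\circ e)$; applying \eqref{semi-truss} then gives
\[
a\circ\sigma(b) \;=\; a\circ\bigl(e\diamond(b\circ e)\bigr) \;=\; (a\circ e)\diamond\lambda(a,b\circ e) \;=\; \sigma(a)\diamond\bigl(a\la\sigma(b)\bigr),
\]
and combining with (1) delivers \eqref{sig.cocycle}.

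For the easy direction of (3), assume $(A,\circ)$ has a right identity $n$ and that $e\circ e' = e'\circ e = n$ for some $e'\in A$. Then the map $a\mapsto a\circ e'$ is a two-sided inverse to $\sigma$: indeed $\sigma(a\circ e') = a\circ(e'\circ e) = a\circ n = a$ and $(a\circ e)\circ e' = a\circ(e\circ e') = a\circ n = a$, so $\sigma$ is bijective.

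For the converse, suppose $\sigma$ is bijective and set $n := \sigma^{-1}(e)$, so by definition $n\circ e = e$. For any $a\in A$ one computes
\[
\sigma(a\circ n) = (a\circ n)\circ e = a\circ(n\circ e) = a\circ e = \sigma(a),
\]
and injectivity of $\sigma$ forces $a\circ n = a$, so $n$ is a right identity for $(A,\circ)$. Next put $e' := \sigma^{-1}(n)$, which gives $e'\circ e = n$; to see $e\circ e' = n$ as well, observe
\[
\sigma(e\circ e') = (e\circ e')\circ e = e\circ(e'\circ e) = e\circ n = e = \sigma(n),
\]
so injectivity of $\sigma$ yields $e\circ e' = n$. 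Hence $e$ is invertible with respect to the right identity $n$.

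I do not foresee any serious obstacle. The only non-routine input is the guess $n := \sigma^{-1}(e)$ in the converse of (3); after making this choice everything reduces to associativity of $\circ$, one application of the semi-truss law (for part (2)), and repeated use of the injectivity of $\sigma$.
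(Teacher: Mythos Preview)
Your proof is correct and follows essentially the same route as the paper's: part (1) by associativity of $\circ$, part (2) by inserting the left identity $e$ and applying the semi-truss law, and part (3) by setting $n=\sigma^{-1}(e)$ and $e'=\sigma^{-1}(n)$. The only cosmetic difference is that in the hard direction of (3) the paper first records the equivariance $\sigma^{-1}(a\circ b)=a\circ\sigma^{-1}(b)$ and the formula $\sigma^{-1}(a)=a\circ e'$, whereas you reach the same conclusions by computing $\sigma(a\circ n)$ and $\sigma(e\circ e')$ directly and invoking injectivity; the underlying argument is the same.
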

\begin{proof}
The first statement follows immediately from the definition of $\sigma$ and the associativity of $\circ$.

To prove statement (2), first note that since $e$ is an idempotent in $(A,\diamond)$ it is also a left neutral element in $(A,\diamond)$, hence, for all $a,b\in A$,
\begin{equation}\label{sig.circ}
a\circ b = a\circ (e\diamond b) = (a\circ e)\diamond \lambda(a, b) = \sigma(a) \diamond (a\la b).
\end{equation}
Combining \eqref{sig.circ} with \eqref{equivariant} we obtain \eqref{sig.cocycle}.

Finally, to prove (3), 
let us assume first that $\sigma$ is a bijective function and define
\begin{equation}\label{e,u}
n = \sigma^{-1}(e),\qquad u = \sigma^{-1}(n).
\end{equation} 
In consequence of  \eqref{equivariant}, for all $a,b\in A$,
\begin{equation}\label{equiv.sig-1}
\sigma^{-1}(a\circ b) = a\circ  \sigma^{-1}( b).
\end{equation}
Therefore, for  all $a\in A$,
$$
a = \sigma^{-1}(\sigma (a)) = \sigma^{-1}(a\circ e) = a\circ n,
$$
where the second equality follows by \eqref{sigma}, and the last one by \eqref{equiv.sig-1} and \eqref{e,u}. Hence $n$ is a right identity for $\circ$. Furthermore, using the above calculation as well as \eqref{equiv.sig-1} and the definition of $u$  in \eqref{e,u}  one finds, for all $a\in A$,
\begin{equation}\label{sigma.inv}
\sigma^{-1} (a) =\sigma^{-1}(a\circ n) = a\circ \sigma^{-1}(n) = a\circ u.
\end{equation}
Consequently,
$$
e\circ u = \sigma^{-1}(e) = n = \sigma(u) = u\circ e.
$$
Therefore, $u$ is the inverse of $e$ with respect to the right identity $n$.

In the converse direction, denote by $u$ the inverse of $e$ with respect to a right identity for $\circ$. Then the inverse of $\sigma$ is given by $\sigma^{-1}(a) = a\circ u.$
 \end{proof}

\begin{prop}\label{prop.semi-brace}
Let $(A,\diamond,\circ, \lambda)$ be a left semi-truss. Assume that $(A,\circ)$ is a group and that $(A,\diamond)$ is a left cancellative semigroup with an idempotent $e$. Define $\sigma:A\to A$ by $\sigma(a) = a\circ e$. Then $\sigma$ is a bijective function, and let $\bullet$ be a binary operation on $A$ given by
$$
a\bullet b = \sigma\left(\sigma^{-1}(a)\circ \sigma^{-1}(b)\right),
$$
for all $a,b,\in A$. The triple $(A,\diamond,\bullet)$ is a (left) semi-brace.
\end{prop}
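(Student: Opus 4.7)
The plan is to first establish the bijectivity of $\sigma$ and extract an explicit formula for its inverse, then identify $(A,\bullet)$ as the transport of $(A,\circ)$ along $\sigma$, and finally verify the semi-brace distributive law by reducing $\bullet$, via the equivariance \eqref{equivariant}, to $\circ$ precomposed with $\sigma^{-1}$ and then applying the semi-truss law \eqref{semi-truss} twice.

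For the first step, since $(A,\circ)$ is a group, the element $e$ has an inverse $e^\circ$ with respect to the group identity $n$; hence Proposition~\ref{prop.sigma}(3) immediately gives bijectivity of $\sigma$, and formula \eqref{sigma.inv} supplies $\sigma^{-1}(a)=a\circ e^\circ$. The defining identity $a\bullet b=\sigma(\sigma^{-1}(a)\circ\sigma^{-1}(b))$ makes $\sigma$ a magma isomorphism from $(A,\circ)$ onto $(A,\bullet)$, so $(A,\bullet)$ is a group with neutral element $\sigma(n)=n\circ e=e$; write $a^\bullet$ for the $\bullet$-inverse of $a$. Together with the standing assumption that $(A,\diamond)$ is a left cancellative semigroup, this leaves only the semi-brace distributive law \eqref{semi-brace} to be checked.

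The crucial simplification is that applying \eqref{equivariant} to the definition of $\bullet$ collapses it to
$$
a\bullet b=\sigma(\sigma^{-1}(a)\circ\sigma^{-1}(b))=\sigma^{-1}(a)\circ\sigma(\sigma^{-1}(b))=\sigma^{-1}(a)\circ b,
$$
so that $\bullet$ is literally $\circ$ pre-composed in its first argument with $\sigma^{-1}$. Setting $\bar a=\sigma^{-1}(a)$, a single application of \eqref{semi-truss} rewrites the left-hand side of \eqref{semi-brace} as $\bar a\circ(b\diamond c)=(a\bullet b)\diamond\lambda(\bar a,c)$, and a second application of \eqref{semi-truss} inside the inner $\bullet$ on the right-hand side rewrites it as $(a\bullet b)\diamond(\bar a\circ a^\bullet)\diamond\lambda(\bar a,c)=(a\bullet b)\diamond e\diamond\lambda(\bar a,c)$, using $\bar a\circ a^\bullet=a\bullet a^\bullet=e$. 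The two expressions agree because $e$ is a left identity in $(A,\diamond)$, a fact recorded just before Proposition~\ref{prop.act} (every idempotent in a left cancellative semigroup is a left identity).

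The only nontrivial observation is the collapse $a\bullet b=\bar a\circ b$ via \eqref{equivariant}; once that is available, the semi-brace law is essentially the semi-truss law in disguise, combined with $\bar a\circ a^\bullet=e$ and the left-identity property of $e$, so no further obstacles are anticipated.
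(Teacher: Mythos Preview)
Your proof is correct and follows essentially the same route as the paper's. Both arguments hinge on the collapse $a\bullet b=\sigma^{-1}(a)\circ b$ obtained from \eqref{equivariant}, identify $e$ as the $\bullet$-identity and as a left identity for $\diamond$, and then recognise the semi-brace law as the semi-truss law in disguise; the only cosmetic difference is that the paper phrases the computation through the action $\triangleright$ via \eqref{sig.circ} and \eqref{act2}, whereas you apply \eqref{semi-truss} directly, which is marginally more economical.
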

\begin{proof}
Since $(A,\circ)$ is a group, $e$ is necessarily invertible in $(A,\circ)$, hence $\sigma$ is bijective. Clearly $(A,\bullet)$ is a group, so it remains to check whether the semi-brace law \eqref{semi-brace} binds $\diamond$ with $\bullet$. We take any $a,b,c\in A$ and using \eqref{equivariant}, \eqref{sig.circ} and \eqref{act2} compute,
\begin{align*}
a\bullet (b\diamond c) &= \sigma\left(\sigma^{-1}(a)\circ \sigma^{-1}(b\diamond c)\right) = \sigma^{-1}(a)\circ (b\diamond c)\\
&= \sigma\left(\sigma^{-1}(a)\right)\diamond \left(\sigma^{-1}(a)\la (b\diamond c)\right) \\
&= a\diamond \left(\sigma^{-1}(a)\la b\right)\diamond  \left(\sigma^{-1}(a)\la c\right).
\end{align*}
On the other hand, in view of \eqref{sig.cocycle},
$$
a\bullet b = \sigma\left(\sigma^{-1}(a)\right)\diamond  \left(\sigma^{-1}(a)\la \sigma\left(\sigma^{-1}(b)\right)\right) = a\diamond \left(\sigma^{-1}(a)\la b\right).
$$
Hence, 
$$
a\bullet (b\diamond c) = (a\bullet b)\diamond \left(\sigma^{-1}(a)\la c\right).
$$
Note that $1_\bullet = \sigma(1_\circ) = 1_\circ \circ e = e$, where $1_\circ$ is the identity in $(A,\circ)$ and $1_\bullet$ is the identity of $(A,\bullet)$. By Remark~\ref{rem.identity} (or by the fact that $1_\bullet =e$), $1_\bullet$ is an idempotent hence a left identity in $(A,\diamond)$, so
$$
a\bullet (a^\bullet \diamond c) = (a\bullet a^\bullet) \diamond \left(\sigma^{-1}(a)\la c\right) =1_\bullet\diamond \left(\sigma^{-1}(a)\la c\right) = \sigma^{-1}(a)\la c,
$$
where $a^\bullet$ is the inverse of $a$ in $(A,\bullet)$. Therefore,
$$
a\bullet (b\diamond c) =(a\bullet b)\diamond \left(a\bullet (a^\bullet \diamond c) \right),
$$
and thus the semi-brace law \eqref{semi-brace} is established.
\end{proof}

\begin{rem}\label{rem.semi-brace}
Note that the operation $\bullet$ can be defined whenever $\sigma$ is a bijective function, not only when $(A,\circ)$ is a group. The semigroup operations $\diamond$ and $\bullet$ will be still connected by the semi-brace law \eqref{semi-brace}.
\end{rem}

Recall from \cite{Dri:uns} that a function $r:A\times A\to A\times A$ is said to be a solution a set-theoretic Yang-Baxter equation, if
$$
(r\times \id)(\id \times r)(r\times \id)=(\id \times r)(r\times \id)(\id \times r) .
$$
For a review of some aspects of the solutions of Yang-Baxter equations and their applications the reader can consult \cite{Nic:Yan} and references therein.

Combining Proposition~\ref{prop.semi-brace} with \cite[Theorem~9]{CatCol:sem} we obtain:

\begin{cor}\label{cor.YB}
Let $(A,\diamond,\circ, \lambda)$ be a left semi-truss. Assume that $(A,\circ)$ is a group and that $(A,\diamond)$ is a left cancellative semigroup with an idempotent $e$. Then the function
\begin{align}\label{sol.YB}
r:  A\times A &\to A\times A, \nonumber\\
 (a,b) &\mapsto \left(a\circ e^\circ \left(\left(e\circ a^\circ\circ  e\right)\diamond b\right)\, ,\, e\circ \left(\left(e\circ a^\circ\circ e\right)\diamond b\right)^\circ \circ b\right),
\end{align}
where $a^\circ$ etc., is the inverse of $a$ in $(A,\circ)$, is a solution to the set-theoretic Yang-Baxter equation.
\end{cor}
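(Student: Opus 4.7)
The plan is to reduce the statement to \cite[Theorem~9]{CatCol:sem} by transporting the Yang-Baxter solution attached to a semi-brace along the change of operation produced in Proposition~\ref{prop.semi-brace}. First I would record that, because $(A,\circ)$ is a group, the idempotent $e$ is in particular $\circ$-invertible, so Proposition~\ref{prop.sigma}(3) guarantees that $\sigma(a)=a\circ e$ is bijective with $\sigma^{-1}(a)=a\circ e^\circ$. A short calculation then shows that the operation $\bullet$ of Proposition~\ref{prop.semi-brace} has the compact form
$$
a\bullet c \;=\; \sigma\bigl(\sigma^{-1}(a)\circ \sigma^{-1}(c)\bigr) \;=\; a\circ e^\circ\circ c,
$$
since the trailing $e^\circ\circ e$ cancels. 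Consequently $1_\bullet=e$ and the $\bullet$-inverse of $a$ is $a^\bullet = e\circ a^\circ\circ e$. By Proposition~\ref{prop.semi-brace}, $(A,\diamond,\bullet)$ is a left semi-brace.

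Next I would invoke \cite[Theorem~9]{CatCol:sem}, which produces from any left semi-brace the Yang-Baxter solution
$$
r_\bullet(a,b) \;=\; \bigl(\,a\bullet(a^\bullet\diamond b)\,,\,(a^\bullet\diamond b)^\bullet \bullet b\,\bigr).
$$
It then only remains to substitute the explicit expressions $a\bullet c=a\circ e^\circ\circ c$ and $a^\bullet=e\circ a^\circ\circ e$ into the two slots of $r_\bullet$. The first slot is immediate and gives $a\circ e^\circ\bigl((e\circ a^\circ\circ e)\diamond b\bigr)$. For the second slot one first writes $(a^\bullet\diamond b)^\bullet = e\circ ((e\circ a^\circ\circ e)\diamond b)^\circ\circ e$, and then applying $(-)\bullet b = (-)\circ e^\circ\circ b$ collapses the inner $e\circ e^\circ$ to $1_\circ$, leaving exactly
$$
e\circ\bigl((e\circ a^\circ\circ e)\diamond b\bigr)^\circ\circ b.
$$
This matches the formula \eqref{sol.YB}, so $r=r_\bullet$ and the Yang-Baxter property is inherited from the cited theorem.

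The main obstacle is purely bookkeeping: I must verify carefully that $a\bullet c$ really simplifies to $a\circ e^\circ\circ c$ in the group setting (the crucial step being $\sigma^{-1}(c)\circ e = c\circ e^\circ\circ e = c$) and then track the substitutions through both components without algebraic slip, in particular noticing the $e\circ e^\circ$ cancellation in the second slot. Once the formula for $r$ in the statement is recognised as $r_\bullet$ translated through the $\sigma$-dictionary, no further verification of the cubic Yang-Baxter identity itself is necessary.
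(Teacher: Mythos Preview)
Your proposal is correct and follows essentially the same route as the paper: invoke Proposition~\ref{prop.semi-brace} to obtain the semi-brace $(A,\diamond,\bullet)$, simplify $a\bullet c$ to $a\circ e^\circ\circ c$ (hence $a^\bullet=e\circ a^\circ\circ e$), apply \cite[Theorem~9]{CatCol:sem}, and substitute. Your write-up merely spells out a couple of cancellations (e.g.\ the $e\circ e^\circ$ in the second component) that the paper leaves implicit.
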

\begin{proof}
The corollary is a consequence of Proposition~\ref{prop.semi-brace} and \cite[Theorem~9]{CatCol:sem}. By the former, $(A,\diamond,\bullet)$, where 
\begin{equation}\label{bullet.circ}
a\bullet b = \sigma\left(\sigma^{-1}(a)\circ \sigma^{-1}(b)\right) = \sigma^{-1}(a)\circ \sigma^{-1}(b)\circ e = a\circ e^\circ \circ b,
\end{equation}
is a semi-brace. The second of equalities \eqref{bullet.circ} follows by the definition of $\sigma$ in \eqref{sigma}, while the third one follows by \eqref{sigma.inv}. \cite[Theorem~9]{CatCol:sem} states that 
\begin{equation}\label{r.bul}
r:  A\times A \to A\times A,\qquad   r(a,b) = \left(a\bullet\left(a^\bullet\diamond b\right)\, ,\, \left(a^\bullet\diamond b\right)^\bullet \bullet b\right),
\end{equation}
is a solution to the set-theoretic Yang-Baxter equation. In view of \eqref{bullet.circ}, the inverse of $a$ in $(A,\bullet)$ is
$$
a^\bullet = e\circ a^\circ \circ e.
$$
Taking this into account as well as \eqref{bullet.circ}, solution \eqref{r.bul} translates into \eqref{sol.YB}.
\end{proof}

\section{Semi-trusses for inverse semigroups}\label{sec.inverse}
Recall that a semigroup $(A,\diamond)$ is called an {\em inverse semigroup} if any element $a\in A$ admits a unique element $a^\diamond$  such that
\begin{equation}\label{inverse}
a\diamond a^\di \diamond a =a, \qquad a^\di \di a\di a^\di =a^\di.
\end{equation}
Such an $a^\di$ is called the {\em inverse} of $a$. Note that the inverse function $a\mapsto a^\di$ is involutive and satisfies familiar equality
\begin{equation}\label{ab.inv}
(a\di b)^\di = b^\di \di a^\di,
\end{equation}
for all $a,b\in A$.

Conditions \eqref{inverse} imply that both $a^\di\di a$ and $a\di a^\di$ are idempotents. In an inverse semigroup all idempotents commute, hence the set $E(A,\di)$ of idempotents in $(A,\di)$ is a semi-lattice.

An inverse semigroup $(A,\diamond)$ is partially ordered by the relation $\leq$ defined as follows:
\begin{equation}\label{order.inv}
a\leq b \quad \mbox{if and only if there exists  $e\in E(A,\di)$ such that $a = b\diamond e$}.
\end{equation}
Equivalently the order $a\leq b$ \eqref{order.inv} can be defined by requesting the existence of an idempotent $e$ such that $a = e\di b$. The order is compatible with $\di$ and is preserved  by the inverse function $a\mapsto a^\di$. 

Two elements $a,b$ of an inverse semigroup $(A,\di)$ satisfy the {\em left 
compatibility relation}, written $a\sim_l b$, in case $a\di b^\di$ is an idempotent. This relation is reflexive and symmetric, but not transitive. For more informatiom about inverse semigroups the reader is referred to \cite{Law:inv} or \cite{Pet:inv}.

Since inverse semigroups have inverse function, it is natural to take the distributivity between the binary operation $\circ$ and the ternary operation $[a,b,c]=a\di b^\di\di c$ as the starting point for the discussion of trusses in this case. In this context we obtain the following
\begin{prop}\label{prop.inverse}
Let $\circ$ and $\di$ be associative binary operations on a set $A$ such that $(A,\di)$ is an inverse semigroup and consider the following statements:
\begin{zlist}
\item For all $a,b,c,d\in A$,
$$
a\circ \left (b\diamond c^\diamond\diamond d\right) = (a\circ b)\diamond (a\circ c)^\diamond\diamond(a\circ d).
$$
\item There exists function $\sigma:A\times A\to A$ such that, for all $a,b,c\in A$,
\begin{equation}\label{inv.sig.truss}
 a\circ (b\di c) = (a\circ b)\di  \sigma(a,c)^\di\di (a\circ c). 
\end{equation}
\item There exists function $\lambda:A\times A\to A$ such that $(A,\di,\circ,\lambda)$ is a left semi-truss.
\item There exists function $\tau:A\times A\to A$ such that, for all $a,b,c\in A$,
\begin{equation}\label{inv.tau.truss}
a\circ (b\di c) = (a\circ b)\di  \tau(a,b)^\di\di (a\circ c),
\end{equation}
\item There exists function $\mu:A\times A\to A$ such that, for all $a,b,c\in A$, 
\begin{equation}\label{inv.mu.truss}
a\circ (b\di c) = \mu(a, b)\di  (a\circ c).
\end{equation}
\end{zlist}
Then:
\begin{blist}
\item Statement (1) implies all the other statements.
\item Statement (2) implies statement (3).
\item Statement (4) implies statement (5).
\end{blist}
\end{prop}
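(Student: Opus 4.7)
My plan is to exploit the inverse-semigroup identities $a\di a^\di \di a = a$ together with the fact that $a\di a^\di$ and $a^\di \di a$ are idempotent and hence equal to their own inverses. These allow one to rewrite any product $b\di c$ as a heap expression $x \di y^\di \di z$ by inserting a suitable idempotent, so that the ternary law (1) instantly produces the one-parameter distributive laws (2) and (4).

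For part (a), to derive (2) I would apply (1) with the substitution $(b,c,d) \mapsto (b,\, c\di c^\di,\, c)$. Since $c\di c^\di$ is idempotent and self-inverse, and $b\di (c\di c^\di)\di c = b\di c \di c^\di \di c = b\di c$, the law specialises to
\begin{equation*}
a\circ(b\di c) = (a\circ b)\di \bigl(a\circ (c\di c^\di)\bigr)^\di \di (a\circ c),
\end{equation*}
so $\sigma(a,c) := a\circ (c\di c^\di)$ witnesses (2). Symmetrically, inserting the idempotent $b^\di \di b$, i.e.\ applying (1) to $(b,\, b^\di \di b,\, c)$, and using $b\di (b^\di \di b)\di c = b\di c$, gives
\begin{equation*}
a\circ(b\di c) = (a\circ b)\di \bigl(a\circ (b^\di \di b)\bigr)^\di \di (a\circ c),
\end{equation*}
so $\tau(a,b) := a\circ(b^\di \di b)$ witnesses (4). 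Statements (3) and (5) are then obtained by collapsing the last two $\di$-factors in each of these identities: define $\lambda(a,c) := \sigma(a,c)^\di \di (a\circ c)$ and $\mu(a,b) := (a\circ b)\di \tau(a,b)^\di$.

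Parts (b) and (c) are essentially free, since (b) uses exactly the regrouping $\lambda(a,c) := \sigma(a,c)^\di \di (a\circ c)$ and (c) uses $\mu(a,b) := (a\circ b)\di \tau(a,b)^\di$. I do not foresee a real technical obstacle here; the only conceptual point, and the closest thing to a hurdle, is to notice that the asymmetry between (2) and (4) — whether the extra factor depends on $c$ or on $b$ — is dictated by which side of $b\di c$ is padded with an idempotent (namely $c\di c^\di$ on the right for (2), and $b^\di \di b$ on the left for (4)). Once this observation is in hand, no further inverse-semigroup machinery beyond $a\di a^\di \di a = a$ is required, and in particular the natural partial order $\leq$ on $(A,\di)$ plays no role in this direction of the implications.
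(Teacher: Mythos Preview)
Your proposal is correct and follows essentially the same approach as the paper: the paper also defines $\sigma(a,c)=a\circ(c\di c^\di)$ and $\tau(a,b)=a\circ(b^\di\di b)$, inserts these idempotents via $c=c\di c^\di\di c$ (resp.\ $b=b\di b^\di\di b$) to reduce $b\di c$ to a heap expression, and then reads off $\lambda$ and $\mu$ by the same regroupings you give. The only cosmetic difference is that the paper writes out the chain of equalities directly rather than phrasing it as a substitution into (1).
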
 
\begin{proof}
Clearly, if the assertion (2) holds then $(A,\di,\circ,\lambda)$ is a left semi-truss where $\lambda(a,c) = \sigma(a,c)^\di\di (a\circ c)$. In a similar way assertion (4) implies assertion (5) with $\mu(a,b) = (a\circ b)\di  \tau(a,b)^\di$. 

To prove (a), define 
\begin{equation}\label{sig.inv}
\sigma: A\times A\to A, \qquad (a,c) \mapsto a\circ (c\di c^\di).
\end{equation}
Noting that the involutivity of the inverse function combined with \eqref{ab.inv} implies that $\left(c\di c^\di\right)^\di = c\di c^\di$, we can compute
\begin{align*}
a\circ (b\di c) &= a\circ \left(b\di c\di c^\di\di c\right) = a\circ \left(b\di \left(c\di c^\di\right)^\di \di c\right)\\
& = (a\circ b) \di \left (a\circ \left(c\di c^\di\right)\right)^\di \di (a\circ c) = (a\circ b)\di  \sigma(a,c)^\di\di (a\circ c).
\end{align*}
Hence the equation \eqref{inv.sig.truss} is satisfied and thus statement (1) implies statement (2) and, consequently, also (3)

In a similar way by setting
\begin{equation}\label{tau.inv}
\tau: A\times A\to A, \qquad (a,b) \mapsto a\circ (b^\di\di b),
\end{equation}
one proves that statement (1) implies (4), and thus also (5).
This completes the proof of the proposition.
\end{proof}

\begin{rem}\label{rem.inverse}
Note in passing that functions $\sigma$ defined by \eqref{sig.inv} and $\tau$ defined by  \eqref{tau.inv} are  related by $\tau(a,b) = \sigma(a,b^\di)$.
\end{rem}

\begin{prop}\label{prop.sig.lam}
Let $(A,\di,\circ,\lambda)$ be a left semi-truss such that $(A,\di)$ is an inverse semigroup, and let $\sigma : A\times A\to A$ be given by \eqref{sig.inv}. Then, for all $a,b,c\in A$ and idempotents $e\in E(A,\di)$,
\begin{zlist}
\item $\sigma(a,e) = a\circ e$;\vspace{3pt}

\item $\sigma(a,b\di b^\di) = \sigma(a,b)$;\vspace{3pt}
\item 
$a\circ b = \sigma(a,b)\di \lambda(a,b)$; \vspace{3pt}
\item 
$(a\circ b)\di \lambda(a,b)^\di \leq \sigma(a,b)$ and $\sigma(a,b)^\di\di (a\circ b) \leq \lambda(a,b)$;\vspace{3pt}
\item $(a\circ b)\di (a\circ b)^\di \leq \sigma(a,b)\di \sigma(a,b)^\di$ and $(a\circ b)^\di\di (a\circ b) \leq \lambda(a,b)^\di\di\lambda(a,b)$;\vspace{3pt}
\item 
$a\circ (b\di c) \geq (a\circ b)\di  \sigma(a,c)^\di\di (a\circ c)$;\vspace{3pt}
\item $(a\circ (b\di c))\di (a\circ c)^\di \leq (a\circ b)\di \sigma(a,b)^\di$.
\end{zlist}
\end{prop}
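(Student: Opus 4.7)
The plan is to derive all seven assertions from a single key factorization (assertion (3)) together with standard facts about idempotents and the natural partial order $\leq$ on the inverse semigroup $(A,\di)$. For (1) I would use that every idempotent is self-inverse (from $e\di e\di e = e$ and uniqueness of inverses, $e^\di = e$), giving $\sigma(a,e) = a\circ(e\di e) = a\circ e$. For (2) I would apply (1) to the idempotent $b\di b^\di$, noting $(b\di b^\di)^\di = b\di b^\di$. For (3) I would write $b = (b\di b^\di)\di b$ and apply the semi-truss law \eqref{semi-truss}:
\begin{equation*}
a\circ b = a\circ((b\di b^\di)\di b) = (a\circ(b\di b^\di))\di\lambda(a,b) = \sigma(a,b)\di\lambda(a,b).
\end{equation*}

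Assertions (4) and (5) are formal consequences of (3). Since $\lambda(a,b)\di\lambda(a,b)^\di$ is idempotent, right-multiplying the identity of (3) by $\lambda(a,b)^\di$ yields $(a\circ b)\di\lambda(a,b)^\di = \sigma(a,b)\di(\lambda(a,b)\di\lambda(a,b)^\di)$, which lies below $\sigma(a,b)$ by definition \eqref{order.inv}; the dual calculation using the idempotent $\sigma(a,b)^\di\di\sigma(a,b)$ gives the second inequality of (4). For (5) I invoke compatibility of $\leq$ with $\di$: the inequality $\sigma(a,b)\di(\lambda(a,b)\di\lambda(a,b)^\di)\leq\sigma(a,b)$, right-multiplied by $\sigma(a,b)^\di$, produces $(a\circ b)\di(a\circ b)^\di \leq \sigma(a,b)\di\sigma(a,b)^\di$, and the symmetric argument (left-multiplying $\lambda(a,b)^\di\di\sigma(a,b)^\di\di\sigma(a,b)\leq\lambda(a,b)^\di$ by $\lambda(a,b)$ on the right side, after appropriate rewriting) handles the other inequality.

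Finally, (6) and (7) combine the semi-truss law $a\circ(b\di c) = (a\circ b)\di\lambda(a,c)$ with (4) and compatibility of $\leq$ with $\di$. For (6), the second inequality of (4) applied at $c$ in place of $b$ gives $\sigma(a,c)^\di\di(a\circ c)\leq\lambda(a,c)$; left-multiplying by $(a\circ b)$ yields the claim. For (7), expanding $(a\circ c)^\di = \lambda(a,c)^\di\di\sigma(a,c)^\di$ by (3) gives
\begin{equation*}
(a\circ(b\di c))\di(a\circ c)^\di = (a\circ b)\di(\lambda(a,c)\di\lambda(a,c)^\di)\di\sigma(a,c)^\di,
\end{equation*}
and absorbing the inner idempotent via $\leq$ completes the argument. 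I do not anticipate a serious obstacle: once (3) is in hand the entire proof is mechanical, the only real subtlety being to keep the orientation of each inequality consistent with the convention $x\leq y \iff x = y\di e$ for some $e\in E(A,\di)$, and to remember that the inverse map is order-preserving and interacts with $\di$ via $(x\di y)^\di = y^\di\di x^\di$.
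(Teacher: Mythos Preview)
Your proof is correct and follows essentially the same approach as the paper's; the only cosmetic difference is that the paper packages the derivations of (4) and (5) as general inverse-semigroup lemmas (from $x\di y=z$ deduce $z\di y^\di\leq x$, $x^\di\di z\leq y$, $z\di z^\di\leq x\di x^\di$, $z^\di\di z\leq y^\di\di y$) before specializing, whereas you work directly with the specific elements. One small point worth flagging: your argument for (7), like the paper's own, actually yields the bound $(a\circ b)\di\sigma(a,c)^\di$ rather than the printed $(a\circ b)\di\sigma(a,b)^\di$; this appears to be a typo in the statement, and your version is the correct one.
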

\begin{proof}
Since, for all $e\in E(A,\di)$,  $e^\di = e$, statement (1) follows from the definition of $\sigma$, Eq.~\eqref{sig.inv}. Since $b\di b^\di$ is an idempotent, statement (2) follows from (1) and \eqref{sig.inv}.

Using the definition of the inverse, the truss distributive law in Definition~\ref{def.semi-truss}, and \eqref{sig.inv} we compute, for all $a,b\in A$,
$$
a\circ b = a\circ \left(b\di b^\di\di b\right) = \left(a\circ  \left(b\di b^\di\right)\right)\di \lambda(a,b) = \sigma(a,b)\di \lambda(a,b),
$$
i.e.\ the equality in statement (3). 

Statements (4) and (5) are consequences of (3) and the definition and properties of $\leq$ true in any inverse semigroup. Explicitly, assume that in an inverse semigroup $(A,\di)$, elements $a,b,c$ satisfy  the following equality
\begin{equation}\label{equ}
a\di b = c.
\end{equation}
Then applying $b^\di$ to both sides of \eqref{equ} from the right, and using the fact that $b\di b^\di$ is an idempotent and the definition of partial order $\leq$, we obtain
\begin{equation}\label{ine1}
c\di b^\di \leq a.
\end{equation}
In a similar way, operating with $c^\di$ on both sides of \eqref{equ} and using the fact that $a^\di \di a$ is an idempotent we obtain
\begin{equation}\label{ine2}
a^\di\di c \leq b.
\end{equation}
First applying the inverse function to \eqref{ine1}, then operating on both sides of the resulting inequality by $a$ from the left, and finally using \eqref{equ} we obtain the following  chain of inequalities:
\begin{equation}\label{ine3}
b\di c^\di \leq a^\di \implies a\di b\di c^\di \leq a\di a^\di \implies c\di c^\di \leq a\di a^\di.
\end{equation}
In a similar way, applying the inverse function to \eqref{ine2}, then operating on both sides of the resulting inequality by $b$ from the right, and finally using \eqref{equ} we obtain the following chain of inequalities:
\begin{equation}\label{ine4}
c^\di\di a \leq b^\di \implies c^\di\di a\di b \leq b^\di \di b \implies c^\di\di c \leq b^\di \di b .
\end{equation}
Now making suitable substitutions for $a$, $b$ and $c$ in inequalities \eqref{ine1}--\eqref{ine4}, we conclude that statement (3) implies all the inequalities in (4) and (5)

Using \eqref{semi-truss} and the second statement in (4) we obtain
$$
a\circ (b\di c) = (a\circ b)\diamond \lambda (a, c) \geq (a\circ b)\diamond \sigma(a,c)^\di\di (a\circ c),
$$
i.e. the inequality in statement (6) holds. 

The last statement follows by the semi-truss law \eqref{semi-truss} and the inverse of the second of inequalities (4).
\end{proof}

We finish this note listing a number of properties that $\sigma$ and $\lambda$ satisfy in the case whenever  the inequality in assertion (4) of Proposition~\ref{prop.sig.lam} is an equality (for example, whenever the ternary distributive law in Proposition~\ref{prop.inverse}~(1) holds).

\begin{prop}\label{prop.sig.inv}
Let $(A,\di,\circ,\lambda)$ be a left semi-truss such that $(A,\di)$ is an inverse semigroup and 
\begin{equation}\label{lam.sig.inv}
\lambda(a,b) = \sigma(a,b)^\di\di (a\circ b) = \left(a\circ \left(b\di b^\di\right)\right)^\di \di (a\circ b),
\end{equation}
for all $a,b\in A$ (where $\sigma(a,b) = a\circ (b\di b^\di)$). Then, for all $a,b,c\in A$ and any idempotent $e\in E(A,\di)$:
\begin{zlist}
\item $\sigma\left(a,e\di b\right) = \sigma(a,e)\di\sigma(a,b)^\di\di \sigma(a,b)$; \vspace{3pt}
\item $\sigma\left(a,e\di b\right) = \sigma(a,b)\di\sigma(a,e)^\di\di \sigma(a,e)$; \vspace{3pt}
\item $\sigma(a,b)\di\sigma(a,c)^\di$ and $\sigma(a,c)\di \sigma(a,b)^\di$ are mutually equal idempotents in $(A,\di)$; \vspace{3pt}
\item $\sigma(a,b)\sim_l\sigma(a,c)$; \vspace{3pt}
\item $\sigma(a,b^\di)\di (a\circ b)^\di = \left(a\circ b^\di\right)\di \sigma(a,b)^\di$; \vspace{3pt}
\item $(a\circ b)^\di \di \sigma(a,b) = \sigma(a,b^\di)^\di \di (a\circ b^\di)$; \vspace{3pt}
\item $\lambda (a,b^\di) = \lambda (a,b)^\di$; \vspace{3pt}
\item $\lambda (a,e\di b) = \lambda(a,e)\di \lambda(a,b)$.
\end{zlist}
\end{prop}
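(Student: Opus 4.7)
The plan is to establish the eight assertions in the order (1), (2), (3), (4), (7), (5), (6), (8); the pivot of the whole proof is (7), from which (5) and (6) follow almost immediately, whereas the others reduce to direct manipulations based on the semi-truss law, Proposition~\ref{prop.sig.lam}, and the two defining features of an inverse semigroup --- self-inverseness of idempotents and commutativity of all idempotents.

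For (1) and (2), I start from the observation that $e$ and $b\di b^\di$ are commuting idempotents with $e^\di = e$, so that $(e\di b)\di (e\di b)^\di = e\di (b\di b^\di) = (b\di b^\di)\di e$, and hence $\sigma(a, e\di b)$ can be computed via the semi-truss law through either decomposition; invoking $\sigma(a,e) = a\circ e$ and $\sigma(a, b\di b^\di) = \sigma(a,b)$ from Proposition~\ref{prop.sig.lam} then yields (1) and (2). For (3), specialise (1) and (2) to $e = c\di c^\di$: equating the two resulting expressions and right-multiplying by $\sigma(a,c)^\di$ (using $\sigma(a,c)^\di\di\sigma(a,c)\di\sigma(a,c)^\di = \sigma(a,c)^\di$) gives $\sigma(a,c)\di(\sigma(a,b)^\di\di\sigma(a,b))\di\sigma(a,c)^\di = \sigma(a,b)\di\sigma(a,c)^\di$; the left-hand side is a conjugate of an idempotent and is therefore itself an idempotent. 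Swapping $b$ and $c$ and using that idempotents are self-inverse produces the mutual equality. Statement (4) is just the translation of (3) into the language of $\sim_l$.

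The pivotal step is (7). Applying the semi-truss law to $a\circ(b^\di\di b)$ and $a\circ(b\di b^\di)$ yields the symmetric pair
\[
\sigma(a,b^\di) = (a\circ b^\di)\di\lambda(a,b), \qquad \sigma(a,b) = (a\circ b)\di\lambda(a,b^\di).
\]
Taking inverses and combining with the hypothesis $\lambda(a,x) = \sigma(a,x)^\di\di(a\circ x)$ gives
\[
\lambda(a,b^\di) = \lambda(a,b)^\di\di q', \qquad \lambda(a,b) = \lambda(a,b^\di)^\di\di q,
\]
with $q = (a\circ b)^\di\di(a\circ b)$ and $q' = (a\circ b^\di)^\di\di(a\circ b^\di)$ both idempotents. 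Substitution produces $\lambda(a,b) = q'\di\lambda(a,b)\di q$; inverting and multiplying on the left by $\lambda(a,b)$ expresses $\lambda(a,b)\di\lambda(a,b)^\di$ as a product of two idempotents ending in $q'$, and hence as an element lying below $q'$ in the idempotent semilattice. The standard equivalence $x^\di\di x\leq e \iff x\di e = x$ in inverse semigroups then yields $\lambda(a,b)^\di\di q' = \lambda(a,b)^\di$, whence $\lambda(a,b^\di) = \lambda(a,b)^\di$.

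Given (7), statement (5) follows by rewriting its two sides via the symmetric identities above: both become $(a\circ b^\di)\di\lambda(a,b)\di(a\circ b)^\di$ once $\lambda(a,b^\di)^\di = \lambda(a,b)$ is substituted. Statement (6) reduces analogously to the equality $q\di\lambda(a,b^\di) = \lambda(a,b^\di)$, which holds because inverting $\lambda(a,b) = q'\di\lambda(a,b)\di q$ gives $q\di\lambda(a,b)^\di = \lambda(a,b)^\di$. For (8), expand $\lambda(a,e\di b) = \sigma(a,e\di b)^\di\di\sigma(a,e)\di\lambda(a,b)$ via (1) and the semi-truss law; associativity collapses $\sigma(a,b)^\di\di\sigma(a,b)\di\sigma(a,e)^\di\di\sigma(a,e)$ into $p_b\di p_e$, a product of commuting idempotents $p_x := \sigma(a,x)^\di\di\sigma(a,x)$; the identity $\sigma(a,b)^\di\di\sigma(a,b)\di\sigma(a,b)^\di = \sigma(a,b)^\di$ absorbs $p_b$ into $\lambda(a,b)$, and $\lambda(a,e) = p_e$ because $\sigma(a,e) = a\circ e$, delivering $\lambda(a,e\di b) = p_e\di\lambda(a,b) = \lambda(a,e)\di\lambda(a,b)$. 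The main obstacle is clearly (7): unlike the others, it demands a delicate partial-order argument in the semilattice of idempotents rather than a straightforward substitution.
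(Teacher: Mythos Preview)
Your argument is correct. For items (1)--(4) and (8) you follow essentially the paper's line (with the cosmetic difference that you derive (8) from (1) whereas the paper uses (2), and that for (3) you phrase the idempotency as ``conjugate of an idempotent'' where the paper writes it as $(\sigma(a,e)\di\sigma(a,b)^\di)^\di\di(\sigma(a,e)\di\sigma(a,b)^\di)$).

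The genuine divergence is in the block (5)--(7). The paper proves (5) first, by the following device: from $\sigma(a,b^\di)=a\circ(b^\di\di b)=(a\circ b^\di)\di\sigma(a,b)^\di\di(a\circ b)$ one multiplies on the right by $(a\circ b)^\di$ to obtain the inequality $\sigma(a,b^\di)\di(a\circ b)^\di\leq(a\circ b^\di)\di\sigma(a,b)^\di$; then one \emph{substitutes $b\mapsto b^\di$ and applies the inverse map}, which converts this into the reverse inequality, hence equality. Statement (6) is handled by the same trick, and then (7) drops out of (6) in one line via the hypothesis $\lambda(a,x)=\sigma(a,x)^\di\di(a\circ x)$. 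Your route is the reverse: you prove (7) directly by a semilattice argument (deducing $\lambda(a,b)\di\lambda(a,b)^\di\leq q'$ from $\lambda(a,b)=q'\di\lambda(a,b)\di q$ and then using $x^\di x\leq e\Rightarrow xe=x$), and only afterwards read off (5) and (6). Both strategies are valid; the paper's ``substitute $b\mapsto b^\di$ and invert'' manoeuvre is shorter and avoids the explicit semilattice bookkeeping, while your approach has the virtue of proving (7) --- arguably the most useful of the three --- without passing through (5) or (6).

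One small point: in your reduction of (6), the phrase ``inverting $\lambda(a,b)=q'\di\lambda(a,b)\di q$ gives $q\di\lambda(a,b)^\di=\lambda(a,b)^\di$'' is too compressed. Inverting gives $\lambda(a,b)^\di=q\di\lambda(a,b)^\di\di q'$; to reach $q\di\lambda(a,b)^\di=\lambda(a,b)^\di$ you must rerun your semilattice argument on the other side (left-multiply the original identity by $\lambda(a,b)^\di$ to get $\lambda(a,b)^\di\di\lambda(a,b)\leq q$, hence $\lambda(a,b)\di q=\lambda(a,b)$, then invert). This is clearly what you intend, but it would be worth saying so explicitly.
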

\begin{proof}
The assumption means that the distributive law \eqref{inv.sig.truss} holds, and using \eqref{sig.inv}, the fact that  idempotents in an inverse semigroup commute, and statements (1) and (2) of Proposition~\ref{prop.sig.lam} we can compute
\begin{align*}
\sigma\left(a,e\di b\right) &= a\circ (e\di b\di b^\di \di e) = a\circ (e\di b\di b^\di)\\
&= (a\circ e)\di \sigma\left(a,b\di b^\di\right)^\di \di \left(a\circ \left(b\di b^\di\right)\right) = \sigma(a,e)\di\sigma(a,b)^\di\di \sigma(a,b).
\end{align*}
This proves (1). In a similar way and by the same arguments,
\begin{align*}
\sigma\left(a,e\di b\right) &= a\circ (e\di b\di b^\di \di e) = a\circ (b\di b^\di\di e)\\
&=  \left(a\circ \left(b\di b^\di\right)\right) \di \sigma\left(a,e\right)^\di \di (a\circ e)= \sigma(a,b)\di\sigma(a,e)^\di\di \sigma(a,e).
\end{align*}

Combining statements (1) and (2) we obtain the equality
\begin{equation}\label{sig.sig.sig}
 \sigma(a,e)\di\sigma(a,b)^\di\di \sigma(a,b) = \sigma(a,b)\di\sigma(a,e)^\di\di \sigma(a,e).
 \end{equation}
 Multiplying \eqref{sig.sig.sig} from the right by $\sigma(a,b)^\di$ and using the definition of the inverse we obtain
 \begin{align*}
 \sigma(a,e)\di\sigma(a,b)^\di & =  \sigma(a,b)\di\sigma(a,e)^\di\di \sigma(a,e)\di\sigma(a,b)^\di\\
 &= \left(\sigma(a,e)\di\sigma(a,b)^\di\right)^\di\di \left(\sigma(a,e)\di\sigma(a,b)^\di\right).
 \end{align*}
 Since the right hand side of the above equality is an idempotent, so is the left hand side, i.e.\ $ \sigma(a,e)\di\sigma(a,b)^\di \in E(A,\di)$. Consequently, the inverse of this idempotent  $ \sigma(a,b)\di\sigma(a,e)^\di$ in also an idempotent and it is equal to $ \sigma(a,e)\di\sigma(a,b)^\di$.  Replacing $e$ by $c\di c^\di$ and using assertion (2) of Proposition~\ref{prop.sig.lam} one obtains statement (3). The statement (4) is an immediate consequence of (3).
 
 The proof of statement (5) starts with the definition of $\sigma$ to which the law \eqref{inv.sig.truss} is applied, yielding
 $$
 \sigma(a,b^\di) = \left(a\circ b^\di\right)\circ \sigma(a,b)^\di\circ (a\circ b).
 $$
 Multiplying this equality by $(a\circ b)^\di$ we obtain
 $$
  \sigma(a,b^\di) \di (a\circ b)^\di = \left(a\circ b^\di\right)\circ \sigma(a,b)^\di\di (a\circ b)\di (a\circ b)^\di,
  $$
which means
\begin{equation}\label{ineq1}
 \sigma(a,b^\di) \di (a\circ b)^\di \leq  \left(a\circ b^\di\right)\circ \sigma(a,b)^\di.
 \end{equation}
 Since the inequality \eqref{ineq1} holds for all $a,b,\in A$, we can replace $b$ by $b^\diamond$ to observe that
 $$
  \sigma(a,b) \di (a\circ b^\di)^\di \leq  \left(a\circ b\right)\circ \sigma(a,b^\di)^\di .
 $$
 Applying the inverse function to this inequality and remembering that it preserves inequalities we obtain
\begin{equation}\label{ineq2}
  \left(a\circ b^\di\right)\circ \sigma(a,b)^\di \leq \sigma(a,b^\di) \di (a\circ b)^\di .
 \end{equation} 
Combining \eqref{ineq1} with \eqref{ineq2} gives the required equality.
 
 The proof of statement (6) follows similar steps to those taken for proving (5). We start with the definition of $\sigma$ and use \eqref{inv.sig.truss} to obtain
 $$
 \sigma(a,b) = (a\circ b)\di \sigma(a,b^\di)^\di  \di (a\circ b^\di).
 $$
 Then we multiply both sides of this equality by $(a\circ b)^\di$ (this time from the left) to deduce the inequality
 $$
(a\circ b)^\di \di \sigma(a,b) \leq \sigma(a,b^\di)^\di \di (a\circ b^\di).
$$
Replacing $b$ by $b^\di$ and taking the inverse one obtains the opposite relation, and hence the required equality.

Statement (7) follows from (6) and the correspondence between $\lambda$ and $\sigma$ in \eqref{lam.sig.inv}:
\begin{align*}
\lambda(a,b^\di) &= \sigma(a,b^\di)^\di \di (a\circ b^\di) = (a\circ b)^\di \di \sigma(a,b)\\
&=  \left(\sigma(a,b)^\di \di (a\circ b)\right)^\di = \lambda(a,b)^\di,
\end{align*}
as required.

 To prove (8), take any $a,b\in A$ and an idempotent $e$, and using assertion (2), the distributive law \eqref{inv.sig.truss}, assertion (1)  of Proposition~\ref{prop.sig.lam} and (3), compute
 \begin{align*}
\lambda(a,e\di b) &= \sigma(a,e\di b)^\di\di (a\circ (e\di b) )\\
&= \left(\sigma(a,b)\di\sigma(a,e)^\di\di \sigma(a,e)\right)^\di \di (a\circ e)\di \sigma(a,b)^\di\di (a\circ b)\\
&=  \sigma(a,e)^\di\di\sigma(a,e)\di \sigma(a,b)^\di  \di \sigma(a, e)\di \sigma(a,b)^\di\di (a\circ b)\\
&= \sigma(a,e)^\di\di\sigma(a,e)\di \sigma(a,b)^\di  \di (a\circ b) = \lambda(a,e)\di \lambda(a,b),
\end{align*}
as required.
\end{proof}

\subsection*{Acknowledgments}
I would like to express my deep gratitude to Florin Nichita and his family for very warm hospitality in Ploiesti during the 13th International Workshop on Differential Geometry and its Applications at the Petroleum-Gas University, Ploiesti, September 26--28, 2017. I would also like to thank Professor Radu Iordanescu for invitation to give a talk at this workshop.

The research presented in this paper is partially supported by the Polish National Science Centre grant 2016/21/B/ST1/02438.

\end{document}